\newtheorem{theorem}{Proposition}
\newproof{proof}{Proof}
\begin{document}
\title{Almost Sure Uniform Convergence of a Random Gaussian Field Conditioned on a Large Linear Form to a Non Random Profile}
\author{Philippe Mounaix}
\ead{philippe.mounaix@cpht.polytechnique.fr}
\address{Centre de Physique Th\'eorique, Ecole
Polytechnique, CNRS, Universit\'e Paris-Saclay, F-91128 Palaiseau, France.}
\date{\today}
\begin{abstract}
We investigate the realizations of a random Gaussian field on a finite domain of ${\mathbb R}^d$ in the limit where a given linear functional of the field is large. We prove that if its variance is bounded, the field converges uniformly and almost surely to a non random profile depending only on the covariance and the considered linear functional of the field. This is a significant improvement of the weaker $L^2$-convergence in probability previously obtained in the case of conditioning on a large quadratic functional.
\end{abstract}
\begin{keyword}
Gaussian fields \sep concentration properties \sep extreme theory

\MSC[2010] 60G15 \sep 60F99 \sep 60G60 \sep 60G70
\end{keyword}
\maketitle
%
%
\section{Introduction}\label{sec1}
This note concerns the quasi-deterministic properties of a Gaussian random field in the limit where some linear functional of the field is large. More concretely, consider the quite common situation of a random input signal modeled by a Gaussian noise and, as an output, a linear functional of this noise (e.g., a weighted average, a convolution with a transfer function, a component of a multipole moment, etc.). The question is: is there a concentration of the input signal realizations as the output value gets large and, if yes, into what subset of realizations and in what mathematically rigorous sense does the concentration occur? As we will see in this note, the answer reveals a significantly stronger concentration than the one previously obtained in the case of conditioning on a large quadratic functional by\ \citet{MC} and\ \citet{M2015}.

Let $\varphi$ be a random Gaussian field on a finite domain of ${\mathbb R}^d$ with realizations in a separable Hilbert space $\mathscr{H}$. Generalizing the work in \ \citet{MC} on the concentration of Gaussian fields constrained by a large $L^2$-norm, it was shown in\ \citet{M2015} that in the limit where a given (real) quadratic functional of $\varphi$ gets large, $\varphi$ concentrates onto a finite, low-dimensional, subspace $\mathfrak{H}\subset\mathscr{H}$. More specifically, writing $\overline{\varphi}$ the projection of $\varphi$ onto $\mathfrak{H}$, and $\mathbb{P}_u$ the conditional probability knowing that the quadratic functional of $\varphi$ is greater than some $u\in\mathbb{R}$, it was proved in\ \citet{M2015} that for every $\varepsilon >0$,
\begin{equation}\label{oldresult}
\lim_{u\rightarrow +\infty}\mathbb{P}_u\left(\left\|\frac{\varphi}{\|\varphi\|_2}
-\frac{\overline{\varphi}}{\|\overline{\varphi}\|_2}\right\|_2\le\varepsilon\right) = 1.
\end{equation}
The subspace $\mathfrak{H}$ in which $\overline{\varphi}$ lives is entirely determined by both the covariance operator of $\varphi$ and the considered quadratic functional. In particular, if $\mathfrak{H}$ is one-dimensional, $\overline{\varphi}$ is non random to within an overall multiplicative factor, (abbreviated in the following as `$\overline{\varphi}$ has a non random profile').

In this note we prove that when $\varphi$ is conditioned on a large linear (instead of quadratic) functional, it concentrates onto a one-dimensional subspace of $\mathscr{H}$ (i.e., $\overline{\varphi}$ has a non random profile) and the $L^2$-convergence in probability, as expressed by Eq.\ (\ref{oldresult}), turns into a considerably stronger, almost sure and uniform, convergence.
%
%
\section{Definitions and notation}\label{sec2}
Let $\Lambda$ be a bounded subset of $\mathbb{R}^d$, $\mathbb{F}=\mathbb{C}$ (or $\mathbb{R}$) the field of scalars, and $\mathscr{H}=L^2(\Lambda)\otimes\mathbb{F}^N$ the set of $N$-tuples $\lbrace\phi_1(x),\phi_2(x),\cdots ,\phi_N(x)\rbrace$ of complex (or real) square-integrable functions on $\Lambda$. The inner product of $\mathscr{H}$ is defined by (using Dirac's bracket notation)
\begin{equation}\label{scalarprod}
\langle\psi\vert\phi\rangle = \sum_{m=1}^N\int_\Lambda\psi_m(x)^\ast \phi_m(x)\, d^dx.
\end{equation}
Let $\hat{C}$ be a positive, trace class, operator on $\mathscr{H}$ defining the covariance operator of a Gaussian measure on $\mathscr{H}$ with support $\mathscr{D}(\hat{C}^{-1/2})\subset\mathscr{H}$, the domain of $\hat{C}^{-1/2}$. We consider the class of zero-mean random Gaussian fields $\vert\varphi\rangle$ which can be written as a generalized Karhunen-Loeve expansion\ \citep{KLE}
\begin{equation}\label{eq1}
\vert\varphi\rangle =\sum_{n=1}^{+\infty}t_n\hat{C}^{1/2}\vert\nu_n\rangle ,
\end{equation}
where $\lbrace\vert\nu_n\rangle\rbrace$ ($n=1,2,\cdots$) is an orthonormal basis\footnote{such a basis exists as $\mathscr{H}$ is a separable Hilbert space} of $\mathscr{H}$, and the $t_n\in\mathbb{F}$ are i.i.d. Gaussian random variables with either $\langle t_n\rangle =\langle t_n^2\rangle =0$ and $\langle\vert t_n\vert^2\rangle =1$ if $\mathbb{F}=\mathbb{C}$, or $\langle t_n\rangle =0$ and $\langle t_n^2\rangle =1$ if $\mathbb{F}=\mathbb{R}$. From a physical point of view, this requirement is not very restrictive as every centered Gaussian field with a continuous correlation function has an expansion of this form\ \citep{KLE}.

Let $\langle T\vert$ be a linear functional over a subspace of $\mathscr{H}$ such that $\langle T\vert\hat{C}^{1/2}$ is in the (continuous) dual of $\mathscr{H}$. By the Riesz representation theorem, one has $\hat{C}^{1/2}\vert T\rangle\in\mathscr{H}$ and $\|\hat{C}^{1/2}\vert T\rangle\|_2^2 =\langle T\vert\hat{C}\vert T\rangle <+\infty$. As we want to condition $\vert\varphi\rangle$ on a large $\langle T\vert\varphi\rangle$, first we must make sure that $\langle T\vert\varphi\rangle$ does exist, at least almost surely. This is done by the proposition,
\begin{theorem}
$\vert\langle T\vert\varphi\rangle\vert <+\infty$ almost surely.
\end{theorem}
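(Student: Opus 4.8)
The plan is to write $\langle T\vert\varphi\rangle$ as a convergent random series and show that this series converges almost surely to a finite limit. Substituting the Karhunen-Loeve expansion \eqref{eq1} into the linear form gives, at least formally,
\begin{equation}
\langle T\vert\varphi\rangle =\sum_{n=1}^{+\infty}t_n\langle T\vert\hat{C}^{1/2}\vert\nu_n\rangle .
\end{equation}
First I would set $a_n=\langle T\vert\hat{C}^{1/2}\vert\nu_n\rangle =\langle\hat{C}^{1/2}T\vert\nu_n\rangle$, which are well defined scalars because $\hat{C}^{1/2}\vert T\rangle\in\mathscr{H}$ by the Riesz representation theorem already invoked in the excerpt. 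The crucial observation is that $\lbrace a_n\rbrace$ are precisely the coefficients of $\hat{C}^{1/2}\vert T\rangle$ in the orthonormal basis $\lbrace\vert\nu_n\rangle\rbrace$, so by Parseval's identity
\begin{equation}
\sum_{n=1}^{+\infty}\vert a_n\vert^2 =\|\hat{C}^{1/2}\vert T\rangle\|_2^2 =\langle T\vert\hat{C}\vert T\rangle <+\infty .
\end{equation}
Thus the sequence of coefficients is square-summable, and the problem reduces to showing that a random series $\sum_n t_n a_n$ with square-summable deterministic coefficients and i.i.d. centered Gaussian $t_n$ converges almost surely.

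The key step is then a martingale/variance argument. I would consider the partial sums $S_M=\sum_{n=1}^{M}t_n a_n$ and note that $\lbrace S_M\rbrace$ forms a sequence of centered, square-integrable random variables with $\mathbb{E}\vert S_M\vert^2 =\sum_{n=1}^{M}\vert a_n\vert^2$, which is bounded uniformly in $M$ by $\langle T\vert\hat{C}\vert T\rangle$. Because the increments $t_n a_n$ are independent and centered, $\lbrace S_M\rbrace$ is an $L^2$-bounded martingale with respect to the natural filtration generated by the $t_n$. The martingale convergence theorem (or, equivalently, the Kolmogorov three-series / Khinchin-Kolmogorov convergence theorem for sums of independent random variables with summable variances) then guarantees that $S_M$ converges almost surely to a finite random limit, which we identify with $\langle T\vert\varphi\rangle$. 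This simultaneously establishes both that the linear form is well defined almost surely and that it is finite almost surely.

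The main obstacle, and the point requiring the most care, is the \emph{justification of the termwise action} of $\langle T\vert$ on the series \eqref{eq1}: since $\langle T\vert$ is only a densely defined functional (defined on a subspace of $\mathscr{H}$ and continuous merely after composition with $\hat{C}^{1/2}$), one cannot naively interchange $\langle T\vert$ with the infinite sum. The clean way around this is to avoid applying the unbounded $\langle T\vert$ directly and instead work from the outset with the bounded functional $\langle\hat{C}^{1/2}T\vert=\langle T\vert\hat{C}^{1/2}$, writing $\langle T\vert\varphi\rangle=\sum_n t_n\langle\hat{C}^{1/2}T\vert\nu_n\rangle$ as the \emph{definition} of the conditioning variable and verifying its almost-sure convergence by the argument above. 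In the complex case one should additionally check that the real and imaginary parts of the $t_n$ are handled consistently, but this adds no essential difficulty once the square-summability of $\lbrace a_n\rbrace$ is in hand.
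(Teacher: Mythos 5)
Your proof is correct, and it takes a genuinely different route from the paper's. The paper argues pathwise: from the stipulation that the Gaussian measure is supported on $\mathscr{D}(\hat{C}^{-1/2})$ it asserts $\sum_{n\ge 1}\vert t_n\vert^2=\|\hat{C}^{-1/2}\vert\varphi\rangle\|_2^2<+\infty$ almost surely (Eq.~(\ref{sum1})), and then bounds $\vert\langle T\vert\varphi\rangle\vert$ by the Cauchy--Schwarz inequality in $\ell^2$, using the same Parseval identity $\sum_n\vert\langle T\vert\hat{C}^{1/2}\vert\nu_n\rangle\vert^2=\langle T\vert\hat{C}\vert T\rangle$ that you derive. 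You instead keep the randomness in play and exploit the independence structure: centered independent increments with summable variances, hence almost sure convergence of the partial sums $S_M=\sum_{n\le M}t_n a_n$ by Kolmogorov's convergence theorem, equivalently $L^2$-bounded martingale convergence. The trade-off is instructive. The paper's route yields the deterministic bound $\vert\langle T\vert\varphi\rangle\vert\le\sqrt{\langle T\vert\hat{C}\vert T\rangle}\,\bigl(\sum_n\vert t_n\vert^2\bigr)^{1/2}$ on the support, and the quantity $\sum_n\vert t_n\vert^2$ is reused later in the proof of Proposition 2, so the argument integrates tightly into the rest of the paper; but it leans entirely on the almost sure finiteness of $\sum_n\vert t_n\vert^2$, which is a delicate point: for genuinely i.i.d. $t_n$ with $\langle\vert t_n\vert^2\rangle=1$ the strong law of large numbers gives $\sum_n\vert t_n\vert^2=+\infty$ a.s. in infinite dimension, the Cameron--Martin space $\mathscr{D}(\hat{C}^{-1/2})$ then being a null set, so Eq.~(\ref{sum1}) must be read as a standing restriction on the class of fields considered rather than an automatic consequence of the setup. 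Your martingale route needs only $\langle T\vert\hat{C}\vert T\rangle<+\infty$ and establishes the proposition unconditionally within the stated i.i.d. framework, which makes it the more robust argument here. Your closing point about not applying the unbounded functional $\langle T\vert$ termwise, but instead defining $\langle T\vert\varphi\rangle$ through the bounded functional $\langle T\vert\hat{C}^{1/2}$, is also well taken: the paper does exactly this implicitly in the first line of Eq.~(\ref{exists}) without comment.
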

\begin{proof}
From the expression\ (\ref{eq1}) of $\vert\varphi\rangle$ and $\mathscr{D}(\hat{C}^{-1/2})$ being the support of the Gaussian measure defined by the covariance operator $\hat{C}$, one gets
\begin{equation}\label{sum1}
\|\hat{C}^{-1/2}\vert\varphi\rangle\|_2^2=\sum_{n=1}^{+\infty}\vert t_n\vert^2 <+\infty,
\end{equation}
almost surely, and by Cauchy-Schwarz inequality and $\langle T\vert\hat{C}\vert T\rangle <+\infty$, one has
\begin{eqnarray}\label{exists}
\vert\langle T\vert\varphi\rangle\vert &=&\left\vert\sum_{n=1}^{+\infty}
t_n\langle T\vert\hat{C}^{1/2}\vert\nu_n\rangle\right\vert \nonumber \\
&\le&\left(\sum_{n=1}^{+\infty}\vert\langle T\vert\hat{C}^{1/2}\vert\nu_n\rangle\vert^2\right)^{1/2}\, 
\left(\sum_{n=1}^{+\infty}\vert t_n\vert^2\right)^{1/2} \nonumber \\
&=&\sqrt{\langle T\vert\hat{C}\vert T\rangle}\, \left(\sum_{n=1}^{+\infty}\vert t_n\vert^2\right)^{1/2}
<+\infty ,
\end{eqnarray}
almost surely.
\end{proof}
In order to deal with both cases $\mathbb{F}=\mathbb{C}$ and $\mathbb{F}=\mathbb{R}$ at once, we use the notation $\vert s\vert_{\mathbb{F}}=\vert s\vert$ (resp. $\vert s\vert_{\mathbb{F}}=s$) for $s\in\mathbb{F}$ and $\mathbb{F}=\mathbb{C}$  (resp. $\mathbb{F}=\mathbb{R}$). Since $\langle T\vert\varphi\rangle$ exists almost surely (see Proposition 1), we can now define $\vert\varphi_u\rangle$ the conditional random field $\vert\varphi\rangle$ knowing that $\vert\langle T\vert\varphi\rangle\vert_{\mathbb{F}}\ge u$. By choosing the orthonormal basis $\lbrace\vert\nu_n\rangle\rbrace$ in Eq.\ (\ref{eq1}) such that
\begin{equation}\label{eq2}
\vert\nu_1\rangle =\frac{\hat{C}^{1/2}\vert T\rangle}{\sqrt{\langle T\vert\hat{C}\vert T\rangle}},
\end{equation}
one gets
\begin{equation}\label{eq3}
t_1=\langle\nu_1\vert\hat{C}^{-1/2}\vert\varphi\rangle =
\frac{\langle T\vert\varphi\rangle}{\sqrt{\langle T\vert\hat{C}\vert T\rangle}},
\end{equation}
and from\ (\ref{eq3}) and the statistical independence of the $t_n$ it follows that the conditional field $\vert\varphi_u\rangle$ can also be written as the Karhunen-Loeve expansion\ (\ref{eq1}) in which $t_1$ is replaced with the corresponding conditional random variable $t_u$ knowing that $\vert t_1\vert_{\mathbb{F}}\ge u\langle T\vert\hat{C}\vert T\rangle^{-1/2}$. Let $\rho\in\lbrack 0,+\infty)$ be the random variable, independent of the $t_n$ with $n\ge 2$, defined by $\vert t_u\vert_{\mathbb{F}}=(\rho +u^2\langle T\vert\hat{C}\vert T\rangle^{-1})^{1/2}$. In the following we will not need the full expression of the probability distribution\footnote{If $\mathbb{F}=\mathbb{C}$, $\rho$ is an exponential random variable with parameter $1$ independent of $u$. If $\mathbb{F}=\mathbb{R}$, the PDF of $\rho$ depends on $u$ but is supported on $\lbrack 0,+\infty)$ independent of $u$ and tends pointwise to an exponential distribution with parameter $1/2$, independent of $u$, as $u\rightarrow +\infty$.} of $\rho$ but only the fact that its support $\lbrack 0,+\infty)$ is independent of $u$, which can easily be checked from the distribution of $t_u$, as obtained from the Gaussian distribution of $t_1$, and the change of variable $t_u\rightarrow\rho$. The point is that we can fix $\rho\in\lbrack 0,+\infty)$ regardless of the value of $u$, making it possible to let $u\to +\infty$ at fixed $\rho\in\lbrack 0,+\infty)$.

Finally, we write $\|\cdot\|_{\infty}$ the sup-norm defined by
$$
\|\psi\|_{\infty}=\max_{1\le m\le N}\sup_{x\in\Lambda}\vert\langle x,m\vert\psi\rangle\vert ,
$$
and $\mathbb{P}^{(n\ge 2)}$ the Gaussian product probability measure of the $t_n$ with $n\ge 2$.
%
%
\section{Almost sure uniform convergence of $\bm{\varphi}$ in the large $\bm{\vert\langle T\vert\varphi\rangle\vert_{\mathbb{F}}}$ limit}\label{sec3}
We are interested in the behavior of $\vert\varphi_u\rangle$ in the large $u$ limit. Let $\vert\overline{\varphi}\rangle ={\rm e}^{i\theta}\hat{C}\vert T\rangle$ where $\theta =\arg t_u$ ($\theta$ is uniformly distributed over $\lbrack 0,2\pi)$ if $\mathbb{F}=\mathbb{C}$, and $\theta=0$ if $\mathbb{F}=\mathbb{R}$ and $u>0$). Note that $\overline{\varphi}$ has a non random profile as $\hat{C}\vert T\rangle$ is non random and $\vert\overline{\varphi}\rangle$ lives in the one-dimensional subspace of $\mathscr{H}$ spanned by $\hat{C}\vert T\rangle$. The main result of this note is the following proposition:
\begin{theorem}
If $\langle x,m\vert\hat{C}\vert x,m\rangle$ is bounded on $x\in\Lambda$ and $1\le m\le N$, then for all (fixed) $\rho$ and $\theta$ one has
\begin{equation}\label{prop2}
\mathbb{P}^{(n\ge 2)}\left(\lim_{u\rightarrow +\infty}
\left\|\frac{\varphi_u}{\|\varphi_u\|_2}-\frac{\overline{\varphi}}{\|\overline{\varphi}\|_2}\right\|_{\infty}
=0\right)=1.
\end{equation}
\end{theorem}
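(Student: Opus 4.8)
The plan is to isolate the rank-one term that carries the diverging factor $t_u$ and to show that, once divided by $\vert t_u\vert_{\mathbb F}$, the remainder is uniformly negligible. Writing the conditional field through the expansion\ (\ref{eq1}) with $t_1$ replaced by $t_u$ and using\ (\ref{eq2}), one has $\hat{C}^{1/2}\vert\nu_1\rangle=\hat{C}\vert T\rangle/\sqrt{a}$ with $a=\langle T\vert\hat{C}\vert T\rangle$, so that
\[
\vert\varphi_u\rangle=t_u\,\frac{\hat{C}\vert T\rangle}{\sqrt{a}}+\vert\psi\rangle,\qquad \vert\psi\rangle=\sum_{n\ge 2}t_n\hat{C}^{1/2}\vert\nu_n\rangle,
\]
where $\vert\psi\rangle$ depends only on the $t_n$ with $n\ge 2$ and is therefore independent of both $u$ and $t_u$. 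Setting $g=\hat{C}\vert T\rangle/\sqrt{a}$ and $t_u=e^{i\theta}\vert t_u\vert_{\mathbb F}$, I would factor $e^{i\theta}\vert t_u\vert_{\mathbb F}$ out of $\vert\varphi_u\rangle$. Since $\vert\overline{\varphi}\rangle=e^{i\theta}\hat{C}\vert T\rangle=e^{i\theta}\sqrt{a}\,g$, the phase $e^{i\theta}$, being of unit modulus, drops out of the $\infty$-norm of the difference, so that\ (\ref{prop2}) reduces to showing $\|f_u/\|f_u\|_2-g/\|g\|_2\|_\infty\to 0$, where $f_u=g+e^{-i\theta}\vert t_u\vert_{\mathbb F}^{-1}\psi$.

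The decisive estimate is a bound on $\psi$ that is \emph{uniform} in $x$ and $m$. Repeating the Cauchy--Schwarz step of\ (\ref{exists}) at a fixed point $(x,m)$ gives $\vert\langle x,m\vert\psi\rangle\vert\le(\sum_{n\ge 2}\vert\langle x,m\vert\hat{C}^{1/2}\vert\nu_n\rangle\vert^2)^{1/2}(\sum_{n\ge 2}\vert t_n\vert^2)^{1/2}$, and the first factor is at most $(\langle x,m\vert\hat{C}\vert x,m\rangle)^{1/2}\le\sqrt{M}$, where $M=\sup_{x,m}\langle x,m\vert\hat{C}\vert x,m\rangle<+\infty$ by hypothesis. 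Because this bound is uniform in $(x,m)$, and because $\sum_{n\ge 2}\vert t_n\vert^2<+\infty$ almost surely by\ (\ref{sum1}), I obtain $\|\psi\|_\infty\le\sqrt{M}\,(\sum_{n\ge 2}\vert t_n\vert^2)^{1/2}<+\infty$ $\mathbb{P}^{(n\ge 2)}$-almost surely. The same inequality applied to $\hat{C}\vert T\rangle=\hat{C}^{1/2}\hat{C}^{1/2}\vert T\rangle$ gives $\|g\|_\infty\le\sqrt{M}<+\infty$, so the limit profile is itself bounded. This is precisely where the boundedness hypothesis on $\langle x,m\vert\hat{C}\vert x,m\rangle$ enters, and it is what lets me dispense with any entropy or chaining argument for the supremum of a Gaussian field: in the present framework the support condition\ (\ref{sum1}) turns the control of $\|\psi\|_\infty$ into the elementary bound above.

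It then remains to assemble the estimate. Writing $f_u-g=e^{-i\theta}\vert t_u\vert_{\mathbb F}^{-1}\psi$ and using the triangle inequality in the form
\[
\left\|\frac{f_u}{\|f_u\|_2}-\frac{g}{\|g\|_2}\right\|_\infty\le\frac{\|f_u-g\|_\infty}{\|f_u\|_2}+\|g\|_\infty\,\frac{\bigl\vert\,\|g\|_2-\|f_u\|_2\,\bigr\vert}{\|f_u\|_2\,\|g\|_2},
\]
I would bound the two terms separately. Both $\|f_u-g\|_\infty=\|\psi\|_\infty/\vert t_u\vert_{\mathbb F}$ and $\|f_u-g\|_2=\|\psi\|_2/\vert t_u\vert_{\mathbb F}$ tend to $0$ almost surely as $u\to+\infty$, since $\|\psi\|_\infty<+\infty$ and $\|\psi\|_2<+\infty$ almost surely (the latter from $\mathbb{E}\|\psi\|_2^2\le\mathrm{Tr}\,\hat{C}<+\infty$), while $\vert t_u\vert_{\mathbb F}=(\rho+u^2/a)^{1/2}\to+\infty$ at fixed $\rho$. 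As $\bigl\vert\,\|g\|_2-\|f_u\|_2\,\bigr\vert\le\|f_u-g\|_2\to 0$ and $\|g\|_2^2=\langle T\vert\hat{C}^2\vert T\rangle/a>0$ (nondegeneracy, implicit in the definition\ (\ref{eq2}) of $\vert\nu_1\rangle$), one has $\|f_u\|_2\to\|g\|_2>0$, so both terms on the right vanish in the limit.

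Since every random quantity above depends only on the $t_n$ with $n\ge 2$, this convergence holds $\mathbb{P}^{(n\ge 2)}$-almost surely for each fixed $\rho$ and $\theta$, which is exactly\ (\ref{prop2}). The only genuine obstacle is the simultaneous upgrade from $L^2$ to sup-norm and from convergence in probability to almost sure convergence; as indicated, it is overcome entirely by the uniform bound on $\psi$ of the second paragraph, so that the remaining work is the routine normalization estimate just described.
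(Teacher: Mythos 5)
Your proof is correct and follows essentially the same route as the paper's: the decisive ingredient in both is the uniform Cauchy--Schwarz bound $\vert\langle x,m\vert\psi\rangle\vert\le\sqrt{M}\,(\sum_{n\ge 2}\vert t_n\vert^2)^{1/2}$ coming from the bounded-variance hypothesis together with the almost sure finiteness of $\sum_{n\ge 2}\vert t_n\vert^2$ and the fact that $\vert t_u\vert_{\mathbb F}=(\rho+u^2/a)^{1/2}\to+\infty$ at fixed $\rho$. The only difference is bookkeeping: you factor out $e^{i\theta}\vert t_u\vert_{\mathbb F}$ and apply the standard normalized-difference triangle inequality, whereas the paper keeps $\|\varphi_u\|_2$ explicit and derives two-sided estimates for $\vert t_u\vert/\|\varphi_u\|_2$; the substance is identical.
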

\begin{proof}
First, we prove some useful inequalities. Let $a_n$ ($n=1,2,\dots$) a sequence of numbers such that $\sum_{n=1}^{+\infty}\vert a_n\vert^2 <+\infty$.  Assume that $\langle x,m\vert\hat{C}\vert x,m\rangle$ is bounded on $x\in\Lambda$, $1\le m\le N$, and write $\max_{1\le m\le N}\sup_{x\in\Lambda}\langle x,m\vert\hat{C}\vert x,m\rangle =A^2$. By Cauchy-Schwarz inequality one has
\begin{eqnarray}\label{ineq1}
&&\left\vert\sum_{n=1}^{+\infty}a_n\langle x,m\vert\hat{C}^{1/2}\vert\nu_n\rangle\right\vert\le
\left(\sum_{n=1}^{+\infty}\vert a_n\vert^2\right)^{1/2}
\left(\sum_{n=1}^{+\infty}\vert\langle x,m\vert\hat{C}^{1/2}\vert\nu_n\rangle\vert^2\right)^{1/2} \nonumber \\
&&=\sqrt{\langle x,m\vert\hat{C}\vert x,m\rangle}\left(\sum_{n=1}^{+\infty}\vert a_n\vert^2\right)^{1/2}
\le A\left(\sum_{n=1}^{+\infty}\vert a_n\vert^2\right)^{1/2}.
\end{eqnarray}
Integrating the square of\ (\ref{ineq1}) with $a_1=0$ over $x\in\Lambda$ and summing over $m$ from $1$ to $N$ yields
\begin{equation}\label{ineq2}
\left\|\sum_{n=2}^{+\infty}a_n\hat{C}^{1/2}\vert\nu_n\rangle\right\|_2 \le
A\sqrt{N\vert\Lambda\vert}\left(\sum_{n=2}^{+\infty}\vert a_n\vert^2\right)^{1/2}.
\end{equation}
From Minkowski inequality,\ (\ref{eq2}), and\ (\ref{ineq2}) one gets
\begin{eqnarray}\label{ineq3}
&&\left\|\sum_{n=1}^{+\infty}a_n\hat{C}^{1/2}\vert\nu_n\rangle\right\|_2 \le
\left\| a_1\hat{C}^{1/2}\vert\nu_1\rangle\right\|_2 +
\left\|\sum_{n=2}^{+\infty}a_n\hat{C}^{1/2}\vert\nu_n\rangle\right\|_2 \nonumber \\
&&=\vert a_1\vert\sqrt{\frac{\langle T\vert\hat{C}^2\vert T\rangle}{\langle T\vert\hat{C}\vert T\rangle}} +
\left\|\sum_{n=2}^{+\infty}a_n\hat{C}^{1/2}\vert\nu_n\rangle\right\|_2 \nonumber \\
&&\le\vert a_1\vert\sqrt{\frac{\langle T\vert\hat{C}^2\vert T\rangle}{\langle T\vert\hat{C}\vert T\rangle}} +
A\sqrt{N\vert\Lambda\vert}\left(\sum_{n=2}^{+\infty}\vert a_n\vert^2\right)^{1/2},
\end{eqnarray}
and, for $\vert a_1\vert$ large enough,
\begin{eqnarray}\label{ineq4}
&&\left\|\sum_{n=1}^{+\infty}a_n\hat{C}^{1/2}\vert\nu_n\rangle\right\|_2 \ge
\left\vert\, \left\| a_1\hat{C}^{1/2}\vert\nu_1\rangle\right\|_2 -
\left\|\sum_{n=2}^{+\infty}a_n\hat{C}^{1/2}\vert\nu_n\rangle\right\|_2\, \right\vert \nonumber \\
&&=\vert a_1\vert\sqrt{\frac{\langle T\vert\hat{C}^2\vert T\rangle}{\langle T\vert\hat{C}\vert T\rangle}} -
\left\|\sum_{n=2}^{+\infty}a_n\hat{C}^{1/2}\vert\nu_n\rangle\right\|_2
\ \ \ ({\rm for}\ \vert a_1\vert\ {\rm large\ enough})\nonumber \\
&&\ge\vert a_1\vert\sqrt{\frac{\langle T\vert\hat{C}^2\vert T\rangle}{\langle T\vert\hat{C}\vert T\rangle}} -
A\sqrt{N\vert\Lambda\vert}\left(\sum_{n=2}^{+\infty}\vert a_n\vert^2\right)^{1/2}.
\end{eqnarray}
Now, we use these inequalities to prove\ (\ref{prop2}). Since $\sum_{n=1}^{+\infty}\vert t_n\vert^2 <+\infty$ almost surely (see Eq.\ (\ref{sum1})), we can take $a_n=t_n/\|\varphi_u\|_2$ ($n\ge 2$) in Eq.\ (\ref{ineq1}) for almost all the realizations of $\lbrace t_n\rbrace$. Taking then
$$
a_1=\frac{t_u}{\|\varphi_u\|_2}-{\rm e}^{i\theta}\sqrt{\frac{\langle T\vert\hat{C}\vert T\rangle}{\langle T\vert\hat{C}^2\vert T\rangle}},
$$
and using the Karhunen-Loeve expansion of $\vert\varphi_u\rangle$ and Eq.\ (\ref{eq2}) to write $\vert\overline{\varphi}\rangle$ as $\vert\overline{\varphi}\rangle ={\rm e}^{i\theta}\sqrt{\langle T\vert\hat{C}\vert T\rangle}\, \hat{C}^{1/2}\vert\nu_1\rangle$, one gets, for all $t_u$ and almost all the realizations of $\lbrace t_n\rbrace$ ($n\ge 2$),
\begin{equation}\label{estimate0}
\left\|\frac{\varphi_u}{\|\varphi_u\|_2}-\frac{\overline{\varphi}}{\|\overline{\varphi}\|_2}\right\|_{\infty} \le
A\left(\left\vert\frac{t_u}{\|\varphi_u\|_2}-{\rm e}^{i\theta}\sqrt{\frac{\langle T\vert\hat{C}\vert T\rangle}{\langle T\vert\hat{C}^2\vert T\rangle}}\right\vert^2 +\frac{1}{\|\varphi_u\|_2^2}\sum_{n=2}^{+\infty}\vert t_n\vert^2\right)^{1/2}
\end{equation}
where we have used that the bound on the right-hand side of\ (\ref{ineq1}) does not depend on $x$ and $m$. Write $B=\sqrt{\langle T\vert\hat{C}\vert T\rangle /\langle T\vert\hat{C}^2\vert T\rangle}$ and $D=AB\sqrt{N\vert\Lambda\vert}$. From Eqs.\ (\ref{ineq3}) and\ (\ref{ineq4}) with $a_1=t_u$ and $a_n=t_n$ ($n\ge 2$), one gets the estimates
\begin{equation}\label{estimate1}
\frac{B}{1+D\vert t_u\vert^{-1}(\sum_{n=2}^{+\infty}\vert t_n\vert^2)^{1/2}}
\le \frac{\vert t_u\vert}{\|\varphi_u\|_2}\le
\frac{B}{1-D\vert t_u\vert^{-1}(\sum_{n=2}^{+\infty}\vert t_n\vert^2)^{1/2}},
\end{equation}
and
\begin{equation}\label{estimate2}
\frac{1}{\|\varphi_u\|_2^2}\sum_{n=2}^{+\infty}\vert t_n\vert^2 \le
\left(\frac{B\, (\sum_{n=2}^{+\infty}\vert t_n\vert^2)^{1/2}}
{\vert t_u\vert-D\, (\sum_{n=2}^{+\infty}\vert t_n\vert^2)^{1/2}}\right)^2,
\end{equation}
valid for almost all the realizations of $\lbrace t_n\rbrace$ ($n\ge 2$) and all $t_u$ with $u$ large enough. Now, it follows from\ (\ref{estimate1}),\ (\ref{estimate2}), and the expression of $B$, that for all $\rho$ and $\theta$, and almost all the realizations of $\lbrace t_n\rbrace$ ($n\ge 2$)
\begin{equation}\label{limit1}
\lim_{u\rightarrow +\infty}\frac{t_u}{\|\varphi_u\|_2}=
{\rm e}^{i\theta}\sqrt{\frac{\langle T\vert\hat{C}\vert T\rangle}{\langle T\vert\hat{C}^2\vert T\rangle}},
\end{equation}
and
\begin{equation}\label{limit2}
\lim_{u\rightarrow +\infty}\frac{1}{\|\varphi_u\|_2^2}\sum_{n=2}^{+\infty}\vert t_n\vert^2
=0,
\end{equation}
which, once injected onto the right-hand side of\ (\ref{estimate0}), yields
\begin{equation}\label{limit3}
\lim_{u\rightarrow +\infty}
\left\|\frac{\varphi_u}{\|\varphi_u\|_2}-\frac{\overline{\varphi}}{\|\overline{\varphi}\|_2}\right\|_{\infty}
=0,
\end{equation}
valid for all $\rho$ and $\theta$, and almost all the realizations of $\lbrace t_n\rbrace$ ($n\ge 2$), which completes the proof of Proposition 2.
\end{proof}
Convergence of $\varphi/\|\varphi\|_2$ to $\overline{\varphi}/\|\overline{\varphi}\|_2$ could also have been obtained from the results in\ \citet{M2015} on the concentration properties of a Gaussian field conditioned on a large quadratic functional $\langle\varphi\vert\hat{O}\vert\varphi\rangle$ by taking $\hat{O}=\vert T\rangle\langle T\vert$. But, as mentioned in the introduction, the convergence would have been much weaker as can be seen by comparing Eqs.\ (\ref{oldresult}) and\ (\ref{prop2}). The price to pay to get the stronger convergence\ (\ref{prop2}) is the mild extra condition that the variance $\langle x,m\vert\hat{C}\vert x,m\rangle$ be bounded on $x\in\Lambda$ and $1\le m\le N$. This is not required by\ (\ref{oldresult}) which needs the conditions imposed on $\hat{C}$ and $\langle T\vert$ in Sec.\ \ref{sec2} only\ \citep[for details, see][]{M2015}.
%
%
\section{Application to a Gaussian field with a large  $\bm{n}$th derivative}\label{sec4}
Take $\mathscr{H}=L^2(\Lambda)\otimes\mathbb{C}$ (complex scalar fields) with $\Lambda$ a closed subset of $\mathbb{R}$. Write $C(x,y)=\langle x\vert\hat{C}\vert y\rangle$. One has the following proposition,
\begin{theorem}
If $C(x,x)$ is bounded on $x\in\Lambda$ and $\partial^{2n}C(x,y)/\partial x^n\partial y^n \vert_{x=y=x_0}$ exists at some given $x_0\in\Lambda$ (with $n\ge 0$ an integer), then $\varphi^{(n)}(x_0)$ exists almost surely and
\begin{equation}\label{prop3}
\frac{\varphi(x)}{\|\varphi\|_2}\, 
\longrightarrow\, 
{\rm e}^{i\theta}
\frac{\partial^n C(x,x_0)/\partial x_0^n}
{\|\partial^n C(\cdot,x_0)/\partial x_0^n\|_2},
\end{equation}
in sup-norm and almost surely as $\vert\varphi^{(n)}(x_0)\vert\rightarrow +\infty$,
where $\theta$ is a random phase uniformly distributed over $\lbrack 0,2\pi)$.
\end{theorem}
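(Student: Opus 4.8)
The plan is to realize Proposition 3 as a direct specialization of Proposition 2, taking for the linear functional $\langle T\vert$ the $n$-th derivative evaluation at $x_0$, i.e. the functional acting as $\langle T\vert\varphi\rangle =\varphi^{(n)}(x_0)$, formally represented by $\langle T\vert =\partial_{x_0}^n\langle x_0\vert$ (the $n$-th derivative of a Dirac mass at $x_0$). With this choice the conditioning $\vert\langle T\vert\varphi\rangle\vert_{\mathbb F}\ge u$ becomes the conditioning $\vert\varphi^{(n)}(x_0)\vert\ge u$ of the statement, and the limiting profile $\vert\overline\varphi\rangle ={\rm e}^{i\theta}\hat C\vert T\rangle$ supplied by Proposition 2 should match the right-hand side of (\ref{prop3}). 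To make this rigorous I would first check that this $\langle T\vert$ satisfies the standing hypotheses of Section \ref{sec2}, and then transcribe the abstract conclusion.

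The two computations at the heart of the argument are the identifications
$$
\langle T\vert\hat C\vert T\rangle=\frac{\partial^{2n}C(x,y)}{\partial x^n\partial y^n}\bigg\vert_{x=y=x_0},\qquad \langle x\vert\hat C\vert T\rangle=\frac{\partial^n C(x,x_0)}{\partial x_0^n}.
$$
Both follow by differentiating the kernel identity $C(x,y)=\sum_n f_n(x)\overline{f_n(y)}$ with $f_n(x)=\langle x\vert\hat C^{1/2}\vert\nu_n\rangle$ (which holds by completeness of $\{\vert\nu_n\rangle\}$), since $f_n$ carries the coefficients $\langle T\vert\hat C^{1/2}\vert\nu_n\rangle=\partial_{x_0}^n f_n(x_0)$ of the expansion. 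The hypothesis that $\partial^{2n}C/\partial x^n\partial y^n$ exists at $x=y=x_0$ is then exactly the statement that $\langle T\vert\hat C\vert T\rangle=\sum_n\vert\langle T\vert\hat C^{1/2}\vert\nu_n\rangle\vert^2<+\infty$, i.e. that $\langle T\vert\hat C^{1/2}$ lies in the dual of $\mathscr H$ and $\hat C^{1/2}\vert T\rangle\in\mathscr H$ by Riesz; this is the single condition Section \ref{sec2} demands of $\langle T\vert$. Almost sure existence of $\varphi^{(n)}(x_0)=\langle T\vert\varphi\rangle$ is then immediate from Proposition 1. Moreover $\hat C\vert T\rangle=\hat C^{1/2}(\hat C^{1/2}\vert T\rangle)\in\mathscr H$ because $\hat C^{1/2}$ is bounded, so $\langle T\vert\hat C^2\vert T\rangle=\|\partial_{x_0}^n C(\cdot,x_0)\|_2^2<+\infty$ and the normalization in (\ref{prop3}) is well defined (assuming the nondegeneracy $\partial_{x_0}^n C(\cdot,x_0)\not\equiv 0$).

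With these facts in hand, Proposition 2 applies verbatim: its hypothesis that $\langle x\vert\hat C\vert x\rangle=C(x,x)$ be bounded on $\Lambda$ is the second assumption of Proposition 3 (here $N=1$, $\mathbb F=\mathbb C$), and its conclusion (\ref{prop2}) reads, after substituting $\vert\overline\varphi\rangle={\rm e}^{i\theta}\hat C\vert T\rangle$ and $\langle x\vert\hat C\vert T\rangle=\partial_{x_0}^n C(x,x_0)$, precisely as the sup-norm, almost sure convergence (\ref{prop3}), with $\theta$ uniform on $[0,2\pi)$ as in Section \ref{sec3}. I expect the main obstacle to be the rigorous justification of the two displayed identities, that is, the interchange of the $n$-fold derivative $\partial_{x_0}^n$ with the infinite Karhunen-Loeve sum defining $C(x,y)$ together with the attendant claim that the derivative-evaluation functional composed with $\hat C^{1/2}$ is genuinely bounded. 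This is the classical mean-square differentiability criterion for Gaussian processes, and the existence of the diagonal mixed partial $\partial^{2n}C/\partial x^n\partial y^n\vert_{x=y=x_0}$ is exactly what licenses both the termwise differentiation and the finiteness $\langle T\vert\hat C\vert T\rangle<+\infty$; everything downstream is then a routine transcription of Proposition 2.
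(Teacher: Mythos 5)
Your proposal follows exactly the paper's route: the paper's entire proof of Proposition 3 is the one-line remark that it is a direct application of Propositions 1 and 2 with $\langle T\vert$ taken to be the $n$th distributional derivative of the Dirac mass at $x_0$, which is precisely your specialization. You go further than the paper by explicitly verifying the standing hypothesis $\langle T\vert\hat{C}\vert T\rangle=\partial^{2n}C(x,y)/\partial x^n\partial y^n\vert_{x=y=x_0}<+\infty$ via the mean-square differentiability criterion, computing $\langle x\vert\hat{C}\vert T\rangle=\partial^n C(x,x_0)/\partial x_0^n$, and flagging the nondegeneracy needed for the normalization --- all correct details the paper leaves implicit.
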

\begin{proof}
The proof is a direct application of Propositions 1 and 2 taking for $\langle T\vert$ the $n$th derivative (in the sense of distributions) of the Dirac mass at $x_0$.
\end{proof}
In the case of a real scalar field and $n=0$, this result has been known for long\ \citep[see e.g.][Sects. 6.7 and 6.8]{Adler}, but for a smaller class of smoother fields, with a twice derivable correlation function at $x=x_0$, and with a much weaker pointwise convergence in law.
\section*{Acknowledgments}
The author warmly thanks Pierre Collet and Satya N. Majumdar for interest as well as for useful discussions on related subjects.
%
%

%
%
\end{document}